\documentclass[oneside]{amsart}
\usepackage{amssymb}
\usepackage{amsxtra}
\usepackage{amsmath}
\usepackage{amsthm}
\usepackage[all]{xy}

\usepackage[pagebackref]{hyperref}

\usepackage[english]{babel}

\oddsidemargin 12mm
\marginparwidth 5mm
\topmargin 4.6mm
\textheight 225mm
\textwidth 150mm
\footskip 10mm

\DeclareMathOperator{\PGL}{PGL}

\DeclareMathOperator{\Sim}{Sim}

\DeclareMathOperator{\HH}{H}

\DeclareMathOperator{\XX}{X}

\DeclareMathOperator{\im}{Im\,}

\DeclareMathOperator{\hgt}{ht}

\DeclareMathOperator{\PGO}{PGO}

\newcommand{\pp}{\mathfrak p}

\newtheorem{lem}{Lemma}
\newtheorem*{thm*}{Theorem}
\newtheorem{thm}{Theorem}

\newtheorem*{cor*}{Corollary}

\title{Rationally isotropic exceptional projective homogeneous varieties are locally isotropic}
\thanks{The authors are partially supported by RFBR~12-01-92695. The second
author is partially supported by RFBR~12-01-31100.}
\author{I. Panin}
\address{PDMI RAS, St. Petersburg, Russia;\\
Chebyshev Laboratory at St. Petersburg State University, Russia}
\email{paniniv@gmail.com}
\author{V. Petrov}
\address{PDMI RAS, St. Petersburg, Russia;\\
Chebyshev Laboratory at St. Petersburg State University, Russia}
\email{victorapetrov@googlemail.com}
\begin{document}
%\selectlanguage{russian}

\begin{abstract}
Assume that $R$ is a local regular ring containing an infinite perfect
field, or that $R$ is the local ring of a point on a smooth scheme over an infinite field.
Let $K$ be the field of fractions of $R$ and $char(K) \neq 2$.
Let $X$ be an exceptional projective homogeneous scheme over $R$.
We prove that in most cases the condition $X(K)\neq\emptyset$ implies $X(R)\neq\emptyset$.
\end{abstract}

\maketitle

\section{Introduction}

We prove the following theorem.

\begin{thm}\label{thm:main}
Let $R$ be
local regular ring containing an infinite perfect field, or
the local ring of a point on a smooth scheme over an infinite field,
%a semi-local regular ring containing an infinite perfect field, or
%a semi-local ring of several points on a smooth scheme over an infinite field,
$K$ be the fraction field of $R$ and $char(K) \neq 2$. Let $G$ be a split simple group of
exceptional type (that is, $E_6$, $E_7$, $E_8$, $F_4$, or $G_2$), $P$ be a
parabolic subgroup of $G$, $\xi$ be a class from $\HH^1(R,G)$, and
$X={}_\xi(G/P)$ be the corresponding homogeneous space.
Assume that $P\ne P_7,\,P_8,\,P_{7,8}$ in case $G=E_8$, $P\ne P_7$ in case
$G=E_7$, and $P\ne P_1$ in case $G=E_7^{ad}$. Then the condition
$X(K)\ne\emptyset$ implies $X(R)\ne\emptyset$.
\end{thm}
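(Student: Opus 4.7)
The plan is to reduce the theorem, pair $(G,P)$ by pair, to a purity-type statement for a familiar algebraic structure (quadratic form, Azumaya algebra with involution, composition algebra, Jordan algebra of reduced type) for which the ``rationally isotropic implies locally isotropic'' property over regular local rings is already available in the literature. The list of exceptional groups is short, so a case analysis is tractable; the work lies in extracting, for every allowed parabolic, an invariant that is both classical enough to admit purity and fine enough to detect the existence of an $R$-point of $X$.

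First, using the Tits classification of isotropy types, I would identify, for each admissible $(G,P)$, the condition $X(R)\ne\emptyset$ with a reduction of the torsor $\xi$ to a suitable subgroup, and via the usual cohomological dictionary (Tits algebras, strongly inner forms, Rost invariants) with the splitting over $R$ of an associated algebraic object. For the allowed parabolics the object can be taken, roughly, as follows: an octonion algebra in type $G_2$ and in the long-root cases of $F_4$; an Albert algebra together with a distinguished idempotent or isotropic element in it in the remaining cases of $F_4$ and in $E_6$; a central simple algebra with involution (obtained by the Tits construction) for the remaining parabolics of $E_6$ and $E_7$; and a quadratic form attached to a geometric realization of $X$ in the few allowed cases of $E_7$ and $E_8$.

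Second, for each of these categories I would invoke the corresponding Grothendieck--Serre-type theorem over a regular local ring: Panin--Pimenov and related work for quadratic forms of equal characteristic $\ne 2$; the analogues by Ojanguren, Panin and Suslin for Azumaya algebras and their involutions; and the corresponding statements of Chernousov--Panin type (or adaptations thereof) for composition and Albert algebras. In every instance the logical step is uniform: the relevant algebraic object is split after passing to $K$, so by the cited purity theorem it is already split over $R$, and reversing the dictionary produces an $R$-point of $X$.

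The hard part will be verifying that such a reduction exists for every admissible $(G,P)$ and that its conclusion really yields an $R$-point of $X$ rather than merely a weaker cohomological vanishing; considerable bookkeeping is needed, especially for the parabolics of $E_7$ and the few permitted ones of $E_8$, where the controlling invariant is not a single classical object but a combined datum (e.g.\ an algebra together with a form) that must be simultaneously trivialized. Moreover, one must see that the exclusions in the statement are forced by the method: for $P_7,\,P_8,\,P_{7,8}$ in $E_8$, for $P_7$ in $E_7$, and for $P_1$ in $E_7^{ad}$, the invariant controlling isotropy of $X$ is the Rost invariant with values in $\HH^3$, for which no purity theorem over a regular local ring is presently known, so the present approach breaks down precisely there.
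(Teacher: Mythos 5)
Your overall philosophy --- reduce each pair $(G,P)$ to a purity statement for a classical algebraic object and quote a known Grothendieck--Serre-type theorem --- is the right general mood, but the proposal misses the single idea that makes the paper's proof work, and as written it has a genuine gap. The paper does \emph{not} pass through octonion algebras, Albert algebras, or algebras with involution at all. Its key mechanism is Lemma~\ref{lem:main}: by \cite[Exp.~XXVI, Cor.~5.10]{SGA}, $X(S)\ne\emptyset$ if and only if $\xi_S$ reduces to the Levi subgroup $L$ of $P$, and the map $\HH^1(S,L)\to\HH^1(S,G)$ is injective; so the whole theorem follows once one knows that the functor $\HH^1(-,L)$ satisfies purity, together with the Grothendieck--Serre theorem for the \emph{isotropic} twisted group ${}_{\xi'}G$ \cite[Theorem~1.0.1]{Pa-newpur} to see that the class $\xi'$ reconstructed from the Levi actually equals $\xi$ and not merely agrees with it generically. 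This last step is exactly the point you flag as ``the hard part'' (that splitting of the invariant over $R$ really yields an $R$-point of $X$ rather than a weaker cohomological statement), and your proposal leaves it unresolved; without it the argument does not close.

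The second problem is that several of the purity theorems you propose to invoke are not available. Purity over a regular local ring for Albert algebras with a distinguished idempotent, or for the combined data you describe in the $E_7$/$E_8$ cases, is not in the literature in the generality needed here. The paper avoids all of this: a case-by-case inspection of Tits' table shows that for every admissible parabolic the Levi modulo its center is a product of groups $\PGL_n$ and $\Sim_{2m}^{(+)}$, $\Sim_{2m+1}^{(+)}$ (Lemma~\ref{lem:morph}), so the only purity inputs are $\HH^1(-,\PGL_n)$ (Azumaya algebras, Theorem~\ref{thm:pgl}), $\HH^1(-,\Sim_n^+)$ (quadratic forms via Panin--Pimenov, Theorems~\ref{thm:sim} and~\ref{thm:sim+}), and a lifting result for central quotients (Theorem~\ref{thm:quot}). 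Finally, your explanation of the excluded parabolics via the Rost invariant does not match the actual obstruction: the cases $E_{7,1}^{78}$, $E_{8,1}^{133}$, $E_{8,2}^{78}$ are excluded because there the Levi modulo center is of exceptional type ($E_6$ or $E_7$) and hence escapes the $\PGL_n$/$\Sim_n^+$ pattern, and $E_{7,1}^{66}$ is excluded because the auxiliary weight $\lambda$ needed in Lemma~\ref{lem:morph} fails to lie in the root lattice.
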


\section{Purity of some $\HH^1$ functors}

Let $A$ be a commutative noetherian domain of finite Krull dimension with a fraction field $F$.
We say that a functor $\mathcal F$ from the category of commutative $A$-algebras
to the category of sets \emph{satisfies purity} for $A$ if we have
$$
\im[{\mathcal F}(A)\to{\mathcal F}(F)]=\bigcap_{\hgt{\pp}=1}\im[{\mathcal F}(A_{\pp})\to{\mathcal F}(F)].
$$
If $\mathcal H$ is an \'{e}tale group sheaf we write
$\HH^i(-,\mathcal H)$
for
$\HH^i_{\text{\'et}}(-, \mathcal H)$
below through the text.
The following theorem is proven in the characteristic zero case
\cite[Theorem~4.0.3]{Pa-pur2}.
We extend it here to an arbitrary characteristic case.
\begin{thm}\label{thm:quot}
Let $R$ be the regular local ring from Theorem
\ref{thm:main}
and $k \subset R$
be the subfield of $R$ mentioned in that Theorem. Let
\begin{equation}\label{eq:seq}\tag{*}
1\to Z\to G\to G'\to 1
\end{equation}
be an exact sequence of algebraic $k$-groups, where $G$ and $G'$ are reductive and $Z$ is a closed central
subgroup scheme in $G$. If the functor $\HH^1(-,G')$ satisfies purity for $R$ then the functor
$\HH^1(-,G)$ satisfies purity for $R$ as well.
\end{thm}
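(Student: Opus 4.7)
The approach is the standard twisting argument for a central extension. Let $\eta_K\in\HH^1(K,G)$ lie in $\bigcap_{\hgt\pp=1}\im[\HH^1(R_\pp,G)\to\HH^1(K,G)]$; I must produce a lift in $\HH^1(R,G)$. Writing $\pi\colon G\to G'$ for the projection and $\eta'_K:=\pi_*(\eta_K)$, functoriality places $\eta'_K$ in the corresponding intersection for $G'$, so by the assumed purity for $\HH^1(-,G')$ there exists $\tilde\eta'\in\HH^1(R,G')$ restricting to $\eta'_K$. Pick a cocycle $c$ over $R$ representing $\tilde\eta'$ (fppf if necessary) and form the twisted sequence
$$1\to Z\to {}_cG\to {}_cG'\to 1,$$
in which $Z$ is unchanged because it is central. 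The twisting bijection matches the fiber of $\HH^1(K,G)\to\HH^1(K,G')$ over $\eta'_K$ with the fiber of $\HH^1(K,{}_cG)\to\HH^1(K,{}_cG')$ over the trivial class, which by exactness equals the image of $\HH^1(K,Z)$. So $\eta_K$ corresponds to some $z_K\in\HH^1(K,Z)$, and the analogous bijection over $R$ reduces the task of lifting $\eta_K$ to that of lifting $z_K$ to an element $z\in\HH^1(R,Z)$.

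To apply purity for $\HH^1(-,Z)$ I first verify local liftability of $z_K$ at every height-one prime $\pp$. Given a lift $\eta_\pp\in\HH^1(R_\pp,G)$ of $\eta_K$, both $\pi_*(\eta_\pp)$ and $\tilde\eta'|_{R_\pp}$ restrict to $\eta'_K$ in $\HH^1(K,G')$. For a regular one-dimensional local ring, the restriction $\HH^1(R_\pp,G')\to\HH^1(K,G')$ is injective (a Grothendieck--Serre/Nisnevich statement for reductive groups over discrete valuation rings), so these two classes coincide. Twisting $\eta_\pp$ by $c|_{R_\pp}$ therefore lands in the kernel of $\HH^1(R_\pp,{}_cG)\to\HH^1(R_\pp,{}_cG')$, and so comes from some $z_\pp\in\HH^1(R_\pp,Z)$ whose restriction to $K$ matches $z_K$ up to the natural action of $({}_cG')(K)$; this is exactly what is required for $z_K$ to lie in the image from $\HH^1(R_\pp,Z)$.

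Finally I invoke purity for $\HH^1(-,Z)$: being a closed central subgroup scheme of the reductive group $G$, the group $Z$ is of multiplicative type, and $\HH^1_{\mathrm{fppf}}(-,Z)$ satisfies purity on regular local rings (via Colliot-Th\'el\`ene--Sansuc for the toral part and d\'evissage through the Kummer sequence for the finite part). The resulting $z\in\HH^1(R,Z)$ pushes forward and then untwists to an element of $\HH^1(R,G)$ restricting to $\eta_K$, completing the proof. The main obstacle, absent in characteristic zero, is the possibility that $\mathrm{char}(k)$ divides the order of $Z$: then $Z$ need not be smooth, the long exact sequence must be interpreted consistently in the fppf topology, and purity for $\HH^1_{\mathrm{fppf}}(-,Z)$ in positive characteristic must be verified. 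This is presumably the technical heart of extending \cite[Theorem~4.0.3]{Pa-pur2} from the characteristic-zero setting.
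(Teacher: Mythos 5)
Your overall strategy is the same as the paper's: push $\eta_K$ forward to $G'$, use the assumed purity for $\HH^1(-,G')$ to find $\tilde\eta'\in\HH^1(R,G')$, control the discrepancy between $\eta_K$ and a lift of $\tilde\eta'$ by a class coming from $Z$, use injectivity of $\HH^1(R_\pp,G')\to\HH^1(K,G')$ over discrete valuation rings (Nisnevich) to get local compatibility, and finish with a purity statement on the $Z$-side. However, there are two genuine gaps in your reduction.

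First, your final step ``pushes forward and then untwists to an element of $\HH^1(R,G)$'' silently assumes that the fiber of $\HH^1(R,G)\to\HH^1(R,G')$ over $\tilde\eta'$ is non-empty: twisting by a cocycle $c$ with values in $G'$ produces the form ${}_cG$, but there is no untwisting map $\HH^1(R,{}_cG)\to\HH^1(R,G)$ unless $c$ lifts to a $G$-valued cocycle (equivalently, unless $\Delta(\tilde\eta')=0$ in $\HH^2_{fppf}(R,Z)$). Over $K$ this is automatic because $\eta_K$ lies in the fiber, but over $R$ it is not; the paper derives it from $\Delta_K(\eta'_K)=0$ together with the injectivity of $\HH^2_{fppf}(R,Z)\to\HH^2_{fppf}(K,Z)$ (Lemma~\ref{Lemma2}), a non-trivial input your argument omits entirely.

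Second, and more seriously, the purity statement you invoke is for the wrong functor. The class $z_K$ is well defined only modulo $\im\bigl(\delta_K\colon {}_cG'(K)\to\HH^1_{fppf}(K,Z)\bigr)$, and — as you yourself observe — the local classes $z_\pp$ only satisfy $z_{\pp,K}\equiv z_K$ modulo that image. This does \emph{not} imply that $z_K$ itself lies in $\im[\HH^1(R_\pp,Z)\to\HH^1(K,Z)]$: it differs from such an element by $\delta_K(g)$ for some $g\in{}_cG'(K)$, and $\delta_K(g)$ need not extend over $R_\pp$ (and the correcting element $g$ may vary with $\pp$). So purity for $\HH^1_{fppf}(-,Z)$, even granted via Colliot-Th\'el\`ene--Sansuc and Kummer d\'evissage, does not close the argument. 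What is actually needed is purity for the quotient functor ${\mathcal F}(-)=\HH^1_{fppf}(-,Z)/\im(\delta)$, which is precisely the paper's Lemma~\ref{Purity} (resting on \cite[Theorem~12.0.36]{Pa-newpur}) and is the real technical content of the extension beyond characteristic zero, rather than purity for $Z$ itself.
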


\begin{lem}
\label{Purity}
Consider the category of $R$-algebras. The functor
$$
R' \mapsto {\mathcal F}(R')=\HH^1_{fppf}(R',Z)/\im(\delta_{R'}),
$$
where $\delta$ is the connecting homomorphism associated to sequence \eqref{eq:seq},
satisfies purity for $R$.
\end{lem}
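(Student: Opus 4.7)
The plan is to derive purity of $\mathcal F$ from two main ingredients: purity of $H^1_{fppf}(-,Z)$ and a weak-approximation statement for the smooth affine group scheme $G'$ over $R$. Since $Z$ is central in the reductive $k$-group $G$, it is commutative of multiplicative type; consequently $H^1_{fppf}(R',Z)$ is abelian and $\delta_{R'}$ is a group homomorphism. As preliminary input I would establish purity for $H^1_{fppf}(-,Z)$ on $R$: after a finite \'etale cover of $\Spec k$, the group $Z$ splits as a product of copies of $\Gm$ and $\mu_n$ (with $n$ possibly divisible by $\operatorname{char}k$); purity for $\Gm$ is $\Pic$-purity on the regular local ring $R$, purity for $\mu_n$ follows from the fppf Kummer sequence together with $\Pic$-purity, and finite \'etale descent transports the conclusion back to $Z$.

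For the main argument, pick $\bar\beta_K\in\bigcap_{\hgt\pp=1}\im[\mathcal F(R_\pp)\to\mathcal F(K)]$ and a lift $\beta\in H^1_{fppf}(K,Z)$. An fppf cohomology class on $K$ spreads out to a non-empty Zariski open $U\subset\Spec R$, so $\beta$ extends to $U$ and only finitely many height-one primes $\pp_1,\dots,\pp_r$ fall outside $U$. At each such $\pp_i$ the hypothesis supplies a lift $\beta_i\in H^1_{fppf}(R_{\pp_i},Z)$ of $\bar\beta_{R_{\pp_i}}$, and the equality of images in $\mathcal F(K)$ reads $\beta=(\beta_i)_K+\delta_K(g_i)$ for some $g_i\in G'(K)$.

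The crux of the proof is then to produce a single $g\in G'(K)$ with $g\cdot g_i^{-1}\in G'(R_{\pp_i})$ for every $i$. I would establish this weak-approximation statement via the open Bruhat decomposition of the reductive group $G'$, whose big cell is an open subvariety isomorphic to a product of affine spaces: after replacing each $g_i$ by a suitable right $G'(R_{\pp_i})$-translate to bring it into the big cell, the problem reduces to the affine-space case, where $R$-integral representatives of $K$-valued data with prescribed behaviour at finitely many height-one primes exist because $R$ is a UFD (in particular $\Pic(R)=0$).

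With such $g$ chosen, $\beta-\delta_K(g)$ lies in $H^1_{fppf}(R_\pp,Z)$ at every height-one $\pp$: outside the bad primes because $\beta$ already extends there, and at $\pp_i$ because $\beta-\delta_K(g)=(\beta_i)_K+\delta_K(g_i g^{-1})$ with $g_i g^{-1}\in G'(R_{\pp_i})$. Applying the preliminary purity for $H^1_{fppf}(-,Z)$ then produces $\alpha\in H^1_{fppf}(R,Z)$ with $\alpha_K=\beta-\delta_K(g)$, whose image in $\mathcal F(R)$ maps to $\bar\beta_K$. The principal obstacle is the weak-approximation step: it is the only point at which the non-abelian structure of $G'$ enters, and where information at several height-one primes must be coordinated simultaneously; handling it carefully in the positive-characteristic setting is what extends the characteristic-zero result.
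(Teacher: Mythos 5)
Your overall strategy --- lift $\bar\beta_K$ to $\beta\in\HH^1_{fppf}(K,Z)$, correct it by $\delta_K(g)$ for a suitable $g\in G'(K)$ so that the result becomes unramified at every height-one prime, and then invoke purity of $\HH^1_{fppf}(-,Z)$ --- is the right one, and it is close in spirit to the argument the paper points to (the paper itself gives no proof here, only the citation to \cite[Theorem~12.0.36]{Pa-newpur}). But two steps do not close as written. First, the condition you impose on $g$, namely $gg_i^{-1}\in G'(R_{\pp_i})$ at the finitely many bad primes, is not sufficient: at a ``good'' height-one prime $\pp$ the class $\beta$ extends, but $\delta_K(g)$ need not, since $g\in G'(K)$ is only a section over some open subset and can acquire poles along new height-one primes. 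So the assertion that $\beta-\delta_K(g)$ is unramified ``outside the bad primes because $\beta$ already extends there'' is false as stated. What is actually required is the global condition that $g\in G'(R_\pp)$ for \emph{every} height-one $\pp$ outside $\{\pp_1,\dots,\pp_r\}$ together with $gg_i^{-1}\in G'(R_{\pp_i})$; that is not weak approximation at finitely many places but an integrality statement over all of $\Spec R$ minus finitely many divisors, and it is precisely the hard content of the cited theorem.

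Second, the big-cell reduction does not compose. If $g=u^-tu^+$ and $g_i=u_i^-t_iu_i^+$ with $u^\pm(u_i^\pm)^{-1}$ and $tt_i^{-1}$ integral at $\pp_i$, then $gg_i^{-1}$ involves terms such as $t_i\bigl(u^+(u_i^+)^{-1}\bigr)t_i^{-1}$, and conjugating an integral unipotent element by the non-integral torus element $t_i$ destroys integrality; so coordinatewise approximation in $U^-\times T\times U^+$ does not yield $gg_i^{-1}\in G'(R_{\pp_i})$. The missing idea is that integrality of the unipotent factors is irrelevant: since $Z$ is central of multiplicative type, the root subgroups of $G'$ lift through the extension \eqref{eq:seq}, so $\delta_K$ kills $U^\pm(K)$ and $\delta_K(g)=\delta_K(t)$ depends only on the torus component (after moving $g_i$ into the big cell by a $G'(R_{\pp_i})$-translate, which changes $\delta_K(g_i)$ by a class unramified at $\pp_i$). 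The whole problem then reduces to the split maximal torus, where factoriality of $R$ produces exactly the required element, as in your $\Gm$-computation. Without this reduction the weak-approximation step is not merely ``the principal obstacle'' but an unproved and, in the form stated, insufficient claim. A smaller point: purity of $\HH^1_{fppf}(-,Z)$ does not follow from the split case by ``finite \'etale descent''; one needs a restriction--corestriction (transfer) or flasque-resolution argument there.
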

\begin{proof}
Similar to the proof of \cite[Theorem~12.0.36]{Pa-newpur}
\end{proof}

\begin{lem}
\label{Lemma2}
The map
$$
\HH^2_{fppf}(R,Z)\to\HH^2_{fppf}(K,Z)
$$
is injective.
\end{lem}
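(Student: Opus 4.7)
\emph{The plan.} Since $Z$ is a closed central subgroup scheme of the reductive $k$-group $G$, it is of multiplicative type. The plan is to choose a resolution of $k$-groups of multiplicative type
$$
1 \to Z \to P \to Q \to 1
$$
with $P$ a quasi-trivial torus; such a resolution always exists, obtained by lifting any surjection from a permutation $\Gal(k^{\mathrm{sep}}/k)$-module onto the character module $X^*(Z)$, the kernel of which is automatically torsion-free so that $Q$ is a torus.

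For the quasi-trivial $P$ and any semi-local $R$-algebra $A$, Shapiro's lemma combined with the triviality of $\Pic$ on semi-local rings gives $\HH^1_{\et}(A, P) = 0$. Taking $A = R$ and $A = K$, and using that $P$ and $Q$ are smooth so that fppf and \'etale $\HH^i$ agree on them, the long exact cohomology sequences attached to the resolution produce a commutative diagram with exact rows
$$
\begin{array}{ccccccc}
0 & \to & \HH^1(R, Q) & \to & \HH^2_{fppf}(R, Z) & \to & \HH^2(R, P) \\
& & \downarrow & & \downarrow & & \downarrow \\
0 & \to & \HH^1(K, Q) & \to & \HH^2_{fppf}(K, Z) & \to & \HH^2(K, P).
\end{array}
$$
A short diagram chase then reduces injectivity of the middle vertical map to injectivity of the two outer verticals.

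Shapiro's lemma identifies the right-hand vertical with a finite product of maps $\Br(R_i) \to \Br(R_i \otimes_R K)$ for regular semi-local \'etale $R$-algebras $R_i$; these are injective by the classical theorem of Auslander--Goldman on the Brauer group of a regular local ring. The main obstacle will be the left-hand vertical, namely injectivity of $\HH^1(R, Q) \to \HH^1(K, Q)$ for the torus $Q$ over the regular local ring $R$. This is a standard purity statement for torsors under a torus on a regular local scheme (every $Q$-torsor over $\Spec R$ trivial at the generic point is itself trivial), which I would either invoke as a black box or establish directly by iterating the above construction with a further quasi-trivial resolution of $Q$ and using the normality of $R$ to control the relevant units.
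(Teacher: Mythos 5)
Your argument is correct and is, in substance, the standard one; note that the paper itself gives no proof here beyond the citation ``similar to [Pa, Theorem~13.0.38]'', and the d\'evissage you propose --- $Z$ is of multiplicative type because it is central in a reductive group, embed it into a quasi-trivial torus $P$ with torus quotient $Q$, kill $\HH^1(-,P)$ via Shapiro and triviality of $\Pic$ of semilocal rings, and reduce to the two outer verticals --- is exactly the mechanism of that reference. Two points deserve more care than you give them. First, for the right-hand vertical Shapiro produces $\HH^2_{\et}(R',\Gm)\to\HH^2_{\et}(R'\otimes_RK,\Gm)$ for $R'$ a regular semilocal finite \'etale $R$-algebra, and you need injectivity for the full cohomological group $\HH^2(-,\Gm)$, not only for the Azumaya Brauer group; so the correct reference is Grothendieck's theorem that $\HH^2(X,\Gm)\to\HH^2(\eta,\Gm)$ is injective for $X$ regular integral (Le groupe de Brauer~II), rather than Auslander--Goldman. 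Second, the left-hand vertical, injectivity of $\HH^1(R,Q)\to\HH^1(K,Q)$ for a torus $Q$ over a regular local ring, is the Grothendieck--Serre conjecture for tori, a genuine theorem of Colliot-Th\'el\`ene and Sansuc; your primary plan of quoting it as a black box is the right one, but the fallback sketch (``iterating the above construction with a further quasi-trivial resolution of $Q$'') does not work as stated: presenting $\HH^1(-,Q)$ as a cokernel of points of tori reduces the injectivity to triviality over $R$ of a $Q$-torsor that is trivial over $K$, which is the statement you set out to prove, and escaping this circularity is precisely what the flasque-resolution machinery of Colliot-Th\'el\`ene--Sansuc is for. With the black-box citations in place the proof is complete.
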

\begin{proof}
Similar to the proof of \cite[Theorem~13.0.38]{Pa-newpur}
\end{proof}

\begin{proof}[Proof of Theorem~\ref{thm:quot}]
Reproduce the diagram chase from the proof of
\cite[Theorem~4.0.3]{Pa-pur2}.
For that consider the commutative diagram
$$
\xymatrix {
\{1\} \ar[r] & {{\mathcal F}(K)} \ar[r]^{\delta_K}  & {\HH^1_{}(K,G)} \ar[r]^{\pi_K} &
{\HH^1(K,G')} \ar[r]^{\Delta_K} & \HH^2_{fppf}(K,Z) \\
\{1\} \ar[r] & {{\mathcal F}(R)} \ar[r]^{\delta} \ar[u]^{\alpha} & {\HH^1_{}(R,G)} \ar[r]^{\pi} \ar[u]_{\beta} &
{\HH^1_{}(R,G')} \ar[u]_{\gamma} \ar[r]^{\Delta} &  \HH^2_{fppf}(R,Z) \ar[u]_{\alpha_1}\\
}
$$
Let
$\xi \in \HH^1_{}(K,G)$ be an
$R$-unramified class and let
$\bar \xi=\pi_K (\xi)$.
Clearly,
$\bar \xi \in \HH^1_{}(K,G')$
is $R$-unramified.
Thus there exists an element
$\bar \xi' \in \HH^1_{}(R,G')$
such that
$\bar \xi'_K=\bar \xi$.
The map $\alpha_1$
is injective by
Lemma \ref{Lemma2}.
One has
$\Delta (\bar \xi')=0$,
since
$\Delta_K (\bar \xi)=0$.
Thus there exists
$\xi' \in \HH^1_{}(R,G)$
such that
$\pi (\xi')=\bar \xi'$.

The group
${\mathcal F}(K)$
acts on the set
$\HH^1_{}(K,G)$,
since $Z$ is a central subgroup of the group $G$.
If
$a \in {\mathcal F}(K)$
and
$\zeta \in \HH^1_{}(K,G)$,
then we will write
$a\cdot \zeta$
for the resulting element in
$\HH^1_{}(K,G)$.

Further, for any two elements
$\zeta_1, \zeta_2 \in \HH^1_{}(K,G)$,
having the same images in
$\HH^1_{}(K,G)$
there exists a unique element
$a \in {\mathcal F}(K)$
such that
$a\cdot \zeta_1 = \zeta_2$.
These remarks hold for the cohomology of the ring $R$, and for the cohomology of the rings $R_{\mathfrak p}$,
where $\mathfrak p$ runs over all height 1 prime ideals of $R$.
Since the images of $\xi'_K$ and $\xi$ coincide in
$\HH^1_{}(K,G^{\prime})$,
there exists a unique element
$a \in {\mathcal F}(K)$
such that
$a \cdot \xi'_K= \xi$
in
$\HH^1_{}(K,G)$.

\begin{lem}
\label{Claim}
The above constructed element
$a \in {\mathcal F}(K)$
is an
$R$-unramified.
\end{lem}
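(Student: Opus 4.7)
The plan is to show, for every height-one prime $\mathfrak p \subset R$, that $a$ lies in the image of the restriction map $\mathcal F(R_{\mathfrak p}) \to \mathcal F(K)$. Fix such a $\mathfrak p$. Since $\xi$ is $R$-unramified, we can choose a lift $\xi_{\mathfrak p} \in \HH^1(R_{\mathfrak p}, G)$ with $(\xi_{\mathfrak p})_K = \xi$. Consider the two elements $\pi_{R_{\mathfrak p}}(\xi_{\mathfrak p})$ and $\bar\xi'_{R_{\mathfrak p}}$ of $\HH^1(R_{\mathfrak p}, G')$; by construction both restrict to $\bar\xi = \pi_K(\xi)$ over $K$.

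The crucial first step is to argue that these two classes already agree in $\HH^1(R_{\mathfrak p}, G')$. Since $R_{\mathfrak p}$ is a DVR and $G'_{R_{\mathfrak p}}$ is reductive, the restriction map $\HH^1(R_{\mathfrak p}, G') \to \HH^1(K, G')$ is injective: this is the one-dimensional case of the Grothendieck--Serre conjecture (due to Nisnevich), combined with the standard twisting trick that promotes "trivial kernel" to "injectivity". Hence $\pi_{R_{\mathfrak p}}(\xi_{\mathfrak p}) = \bar\xi'_{R_{\mathfrak p}} = \pi_{R_{\mathfrak p}}(\xi'_{R_{\mathfrak p}})$, so $\xi_{\mathfrak p}$ and $\xi'_{R_{\mathfrak p}}$ lie in the same fibre of $\pi_{R_{\mathfrak p}}$.

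The remainder is purely formal. By the simply transitive $\mathcal F(R_{\mathfrak p})$-action on this common fibre (the $R_{\mathfrak p}$-analogue of the action the proof already invokes over $K$ and $R$), there is a unique $a_{\mathfrak p} \in \mathcal F(R_{\mathfrak p})$ with $a_{\mathfrak p} \cdot \xi'_{R_{\mathfrak p}} = \xi_{\mathfrak p}$. Base changing to $K$ gives
$$
(a_{\mathfrak p})_K \cdot \xi'_K = (\xi_{\mathfrak p})_K = \xi = a \cdot \xi'_K,
$$
and the uniqueness clause of the $\mathcal F(K)$-action forces $(a_{\mathfrak p})_K = a$. Thus $a$ lies in the image of $\mathcal F(R_{\mathfrak p})$, which is the definition of $R$-unramifiedness at $\mathfrak p$.

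The main obstacle is the injectivity step $\HH^1(R_{\mathfrak p}, G') \hookrightarrow \HH^1(K, G')$; this is the same DVR-level input used in the characteristic-zero ancestor \cite[Theorem~4.0.3]{Pa-pur2}, and since it is available in arbitrary characteristic, the argument transfers without change. Everything else is a diagram chase exploiting the simply transitive $\mathcal F$-torsor structure on the fibres of $\pi$.
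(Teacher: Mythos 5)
Your argument is correct and coincides with the paper's own proof: both reduce to the injectivity of $\HH^1(R_{\mathfrak p},G')\to\HH^1(K,G')$ over the discrete valuation ring $R_{\mathfrak p}$ (the paper's Lemma~\ref{Injectivity}, proved via Nisnevich's theorem plus twisting), and then use the simply transitive action of $\mathcal F(R_{\mathfrak p})$ on the fibre of $\pi_{\mathfrak p}$ to produce $a_{\mathfrak p}$ with $(a_{\mathfrak p})_K=a$ by uniqueness. No discrepancies to report.
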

Assuming Lemma \ref{Claim} complete the proof of the Theorem.
By Lemma \ref{Purity} the functor
${\mathcal F}$
satisfies the purity for regular local rings containing the field $k$.
Thus there exists an element
$a' \in {\mathcal F}(R)$
with
$a'_K= a$.
It's clear that
$\xi''= a' \cdot \xi' \in \HH^1_{}(R,G)$
satisfies the equality
$\xi''_K = \xi$.
It remains to prove Lemma \ref{Claim}.

For that consider a height 1 prime ideal
$\mathfrak p$ in $R$.
Since $\xi$ is an $R$-unramified there exists its lift up to an element
$\tilde \xi$
in
$\HH^1_{}(R_{\mathfrak p},G)$.
Set
$\xi^{\prime}_{\mathfrak p}$
to be the image of the
$\xi^{\prime}$
in
$\HH^1_{}(R_{\mathfrak p},G)$.
The classes
$\pi_{\mathfrak p}(\tilde \xi), \pi_{\mathfrak p}(\xi^{\prime}_{\mathfrak p}) \in \HH^1_{}(R_{\mathfrak p},G')$
being regarded in
$\HH^1_{}(K,G')$
coincide.

The map
$$
\HH^1_{}(R_{\mathfrak p},G') \to \HH^1_{}(K,G')
$$
is injective by Lemma
\ref{Injectivity},
formulated and proven below.
Thus
$\pi_{\mathfrak p}(\tilde \xi) = \pi_{\mathfrak p}(\xi'_{\mathfrak p})$.
Therefore there exists a unique class
$a_{\mathfrak p} \in {\mathcal F}(R_{\mathfrak p})$
such that
$a_{\mathfrak p} \cdot \xi'_{\mathfrak p}=\tilde \xi \in \HH^1_{}(R_{\mathfrak p},G)$.
So,
$a_{\mathfrak p,K} \cdot \xi'_K= \xi \in \HH^1_{}(K,G)$
and
$a_{\mathfrak p,K} \cdot \xi'_K = \xi = a \cdot \xi'_K$.
Thus
$a= a_{\mathfrak p,K}$.
Lemma
\ref{Claim}
is proven.

\end{proof}

To finish the proof of the theorem it remains to prove the following
\begin{lem}
\label{Injectivity}
Let $H$ be a reductive group scheme over a discrete valuation ring $A$.
Assume that for each $A$-algebra $\Omega$ with an algebraically closed field $\Omega$ the algebraic group
$H_{\Omega}$ is connected.
Let $K$ be the fraction field of $A$.
Then the map
$$
\HH^1_{}(R,H) \to \HH^1_{}(K,H)
$$
is injective.
\end{lem}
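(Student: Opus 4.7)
The plan is to use the standard torsor-twisting trick to reduce the injectivity of $\HH^1(A,H)\to\HH^1(K,H)$ to the statement that its kernel is trivial, and then to invoke Nisnevich's theorem on rationally trivial torsors under a reductive group scheme with connected fibers over a discrete valuation ring.

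For the reduction, suppose $\xi_1,\xi_2\in\HH^1(A,H)$ have the same image in $\HH^1(K,H)$ and represent them by $H$-torsors $E_1,E_2$. I would form the inner twist $H':={}_{E_1}H=\Aut(E_1)$ and use the canonical twisting bijection $\HH^1(A,H)\simeq\HH^1(A,H')$, which sends $[E_1]$ to the neutral class and $[E_2]$ to some class $\eta$; by functoriality, $\eta$ becomes trivial in $\HH^1(K,H')$. The group scheme $H'$ is again reductive over $A$ (inner twisting preserves being reductive), and for any algebraically closed $A$-algebra $\Omega$ the torsor $E_1\otimes_A\Omega$ is trivial, so $H'_\Omega\cong H_\Omega$ is connected by hypothesis. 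Thus $H'$ satisfies the same hypotheses as $H$, and it suffices to prove the triviality of the kernel of $\HH^1(A,H')\to\HH^1(K,H')$ for any reductive $A$-group scheme $H'$ with connected geometric fibers.

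The main obstacle, and the heart of the lemma, is this triviality-of-kernel statement: if $E$ is an $H$-torsor over $A$ with $E(K)\neq\emptyset$, then $E(A)\neq\emptyset$. I would invoke Nisnevich's theorem on the rational triviality of torsors under a connected reductive group over a Dedekind ring, specialized here to the DVR $A$. The proof of Nisnevich's theorem passes to the henselization (or completion) of $A$ and uses Bruhat-Tits theory together with the connectedness of the geometric fibers of $H$ in order to realize an $H$-torsor that is trivial over $K$ as the trivial torsor over $A$; then faithfully flat descent from the henselization back to $A$ completes the argument.
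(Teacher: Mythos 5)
Your proposal is correct and follows essentially the same route as the paper: twist $H$ by one of the two torsors to turn the injectivity statement into the triviality of the kernel of $\HH^1(A,\cdot)\to\HH^1(K,\cdot)$ for the inner form, and then conclude by Nisnevich's theorem. The only additions beyond the paper's argument are your (welcome but routine) checks that the inner twist is again reductive with connected geometric fibers and your sketch of the proof of Nisnevich's theorem, which the paper simply cites.
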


\begin{proof}
Let
$\xi_0,\xi_1 \in \HH^1_{}(A,H)$.
Let
${\mathcal H}_0$ be a principal homogeneous $H$-bundle representing the class $\xi_0$.
Let
$H_0$ be the inner form of the group scheme $H$, corresponding to
${\mathcal H}_0$.
Let $X=Spec(A)$. For each $X$-scheme $S$ there is a well-known bijection
$\phi_S\colon \HH^1_{}(S,H) \to \HH^1_{}(S,H_0)$
of non-pointed sets.
That bijection takes the
principal homogeneous $H$-bundle
${\mathcal H}_0 \times_X S$
to the trivial principal homogeneous $H_0$-bundle
$H_0 \times_X S$.
That bijection respects to morphisms of
$X$-schemes.

Assume that
$\xi_{0,K}=\xi_{1,K}$. Then one has
$* = \phi_K (\xi_{0,K})= \phi_K (\xi_{1,K}) \in \HH^1_{}(K,H_0)$.
The kernel of the map
$\HH^1_{}(A,H_0) \to \HH^1_{}(K,H_0)$
is trivial by Nisnevich theorem
\cite{Ni}. Thus
$\phi_A (\xi_1)= * =\phi_A(\xi_0) \in \HH^1_{}(A,H_0)$.
Whence
$\xi_1= \xi_0 \in \HH^1_{}(A,H)$.
The Lemma is proven and the Theorem is proven as well.

\end{proof}

\begin{thm}\label{thm:pgl}
The functor $\HH^1(-,\PGL_n)$ satisfies purity.
\end{thm}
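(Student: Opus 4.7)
Since $\PGL_n$ is adjoint, no nontrivial central short exact sequence realizes it as the middle term, and Theorem~\ref{thm:quot} does not directly apply. My plan is to identify $\HH^1(-,\PGL_n)$ with the set of isomorphism classes of Azumaya algebras of degree $n$ and to reduce the statement to the classical purity theorem for the Brauer group of a regular local ring.

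The starting observation is that for each of the local rings $R'\in\{R,R_{\pp},K\}$ the cohomology sequence associated to
$$
1\to\Gm\to\GL_n\to\PGL_n\to 1,
$$
together with $\HH^1(R',\GL_n)=\{*\}$ (all finitely generated projective modules over a local ring are free), produces a connecting map $\delta_{R'}\colon\HH^1(R',\PGL_n)\to\HH^2(R',\Gm)=\Br(R')$ with trivial kernel. The standard twisting trick, replacing $\PGL_n$ by the inner form $\PGL_1(A_\xi)$ corresponding to a chosen class $\xi$, upgrades this to full injectivity of $\delta_{R'}$.

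Second, I invoke the classical purity theorem for the Brauer group of a regular local ring: $\Br(R)=\bigcap_{\hgt(\pp)=1}\Br(R_\pp)$ inside $\Br(K)$. An $R$-unramified class $\xi\in\HH^1(K,\PGL_n)$, coming from compatible classes $\xi_\pp\in\HH^1(R_\pp,\PGL_n)$ with associated Azumaya algebras $A_\pp$ of degree $n$ whose generic fibres are isomorphic to the degree-$n$ Azumaya $K$-algebra $A$ corresponding to $\xi$, yields a Brauer class $\delta_K(\xi)$ that is $R$-unramified, hence extends to some $\alpha\in\Br(R)$.

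The remaining, and principal, step is to promote the Brauer-class extension $\alpha$ to an Azumaya-algebra extension of degree $n$, i.e.\ to produce $\xi_R\in\HH^1(R,\PGL_n)$ mapping to $\alpha$. The natural candidate is the intersection $B=\bigcap_{\hgt(\pp)=1}A_\pp\subset A$, which is automatically a reflexive $R$-lattice of rank $n^2$ restricting to each $A_\pp$ at height-$1$ primes. The hard part will be to show that $B$ is locally free of rank $n^2$ and Azumaya over $R$: this can be done by a reduction to the two-dimensional case via a regular system of parameters, where reflexive lattices are locally free on the punctured spectrum, so the Azumaya property propagates from the height-$1$ fibres together with the already-established extension of the Brauer class.
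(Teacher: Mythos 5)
Your first two steps track the paper: the boundary map $\delta\colon\HH^1(R',\PGL_n)\to\HH^2(R',\Gm)$ is injective for the local rings in play, and purity for the Brauer group of $R$ extends the unramified class $\delta_K(\xi)$ to an Azumaya $R$-algebra $A$ with $A_K$ Brauer-equivalent to $D_\xi$. The genuine gap is in your third step, which is also the only genuinely hard one: producing from $\alpha\in\Br(R)$ an Azumaya algebra of the \emph{correct degree} $n$. Your candidate $B=\bigcap_{\hgt(\pp)=1}A_\pp$ is problematic twice over. First, it is not well defined as stated: the identifications $(A_\pp)_K\cong A_K$ are only canonical up to conjugation, and one must also check that the $A_\pp$ agree with a single fixed lattice at all but finitely many $\pp$ before the intersection is even a finitely generated $R$-module. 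Second, and more seriously, even granting that $B$ is a reflexive lattice restricting to $A_\pp$ in codimension one, reflexive modules over a regular local ring of dimension $\ge 3$ need not be locally free (second syzygies of the residue field already give counterexamples), and being Azumaya in codimension one does not propagate to the closed point: the Auslander--Goldman statement you are implicitly invoking is a dimension-$\le 2$ theorem. The proposed ``reduction to the two-dimensional case via a regular system of parameters'' is not a valid mechanism here --- purity-type assertions do not localize to two-dimensional slices, and no argument is given that the Azumaya locus of $B$ contains the closed point.

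The paper circumvents this entirely by never building the algebra from codimension-one data. Instead it takes the Azumaya $R$-algebra $A$ supplied by Brauer-group purity (of whatever degree), and uses the structure of projective left $A$-modules over the local ring $R$ (DeMeyer: every finitely generated projective left $A$-module is a power of a single indecomposable $P$) to write $A\cong M_s(B)$ with $B=\End_A(P)$. One checks \'etale-locally that $B$ is Azumaya, and that $B_K=\End_{A_K}(P_K)$ is the underlying division algebra of $A_K$; after the preliminary reduction to the case where $D_\xi$ is a division algebra, this forces $\deg B=n$, and injectivity of $\delta$ over $K$ gives $\zeta_K=\xi$. If you want to salvage your route you would need to replace the intersection by this ``division-algebra part of a global Azumaya representative'' construction; as written, the step from reflexive lattice to Azumaya algebra is unproved and is exactly where the difficulty of the theorem lives.
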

\begin{proof}
Let $\xi \in \HH^1(-,\PGL_n)$ be an $R$-unramified element. Let
$\delta: \HH^1(-,\PGL_n) \to \HH^2(-, G_m)$
be the boundary map corresponding to the short exact sequence of \'{e}tale group sheaves
$$1 \to \mathbb G_m \to \textrm{GL}_n \to \PGL_n \to 1.$$
Let $D_{\xi}$ be a central simple $K$-algebra of degree $n$ corresponding $\xi$.
If
$D_{\xi} \cong M_l(D')$
for a skew-field $D'$, then there exists
$\xi' \in \HH^1_{}(K,\textrm{PGL}_{n'})$
such that
$D'=D_{\xi'}$. Then
$\delta (\xi')=[D']=[D]=\delta (\xi)$.
Replacing $\xi$ by $\xi'$, we may assume that
$D:=D_{\xi}$ is a central skew-field over $K$ of degree $n$ and the class $[D]$ is $R$-unramified.

Clearly, the class $\delta (\xi)$ is $R$-unramified. Thus
there exists an Azumaya $R$-algebra $A$ and an integer $d$ such that $A_K = M_d(D)$.

There exists a projective left $A$-module $P$ of finite rank such that
each projective left $A$-module $Q$ of finite rank is isomorphic to the left
$A$-module $P^m$ for an appropriative integer $m$
(see \cite[Cor.2]{DeM}). In particular,
two projective left $A$-modules of finite rank are isomorphic if they have the same rank as $R$-modules.
One has an isomorphism
$A \cong P^s$
of left $A$-modules for an integer $s$.
Thus one has $R$-algebra isomorphisms
$A \cong \textrm{End}_A(P^s) \cong \textrm{M}_s(\textrm{End}_A(P))$.
Set
$B=\textrm{End}_A(P)$.
Observe, that
$B_K=\textrm{End}_{A_K}(P_K)$,
since $P$ is a finitely generated projective left $A$-module.

The class $[P_K]$ is a free generator of the group
$K_0(A_K)=K_0(M_d(D)) \cong \mathbb Z$,
since $[P]$
is a free generator of the group $K_0(A)$ and
$K_0(A)=K_0(A_K)$.
The
$P_K$
is a simple $A_K$-module, since
$[P_K]$
is a free generator of
$K_0(A_K)$.
Thus
$\textrm{End}_{A_K}(P_K)=B_K$
is a skew-field.

We claim that the $K$-algebras
$B_K$ and $D$
are isomorphic.
In fact,
$A_K=M_r(B_K)$ for an integer $r$, since $P_K$ is a simple $A_K$-module.
From the other side $A_K=M_d(D)$. As $D$, so $B_K$ are skew-fields. Thus
$r=d$ and $D$ is isomorphic to $B_K$ as $K$-algebras.

We claim further that $B$ is an Azumaya $R$-algebra.
That claim is local with respect to the \'{e}tale topology on
$\textrm{Spec}(R)$.
Thus it suffices to check the claim assuming that
$\textrm{Spec}(R)$
is sticktly henselian local ring.
In that case
$A=M_l(R)$ and $P=(R^{l})^m$
as an $M_l(R)$-module.
Thus
$B=\textrm{End}_A(P)=M_m(R)$,
which proves the claim.

Since $B_K$ is isomorphic to $D$, one has $m=n$.
So, $B$ is an Azumaya $R$-algebra, and the $K$-algebra $B_K$ is isomorphic to $D$.
Let
$\zeta \in \HH^1_{}(R,\textrm{PGL}_n)$
be class corresponding to $B$. Then
$\zeta_K=\xi$, since
$\delta (\zeta)_K = [B_K]=[D]=\delta (\xi) \in \HH^2_{}(K,\mathbb G_m)$.

%Cf. \cite[Theorem~2.0.1]{Pa-pur2} (the assumption on the characteristic of
%the base field is not used there).
\end{proof}

\begin{thm}\label{thm:sim}
The functor $\HH^1(-,\Sim_n)$ satisfies purity.
\end{thm}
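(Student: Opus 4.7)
The plan is to apply Theorem~\ref{thm:quot} to the central short exact sequence
$$
1 \to \Gm \to \Sim_n \to \PGO_n \to 1,
$$
in which $\Gm$ sits as the scalar center of $\Sim_n$ and both $\Sim_n$ and $\PGO_n$ are reductive split $k$-groups. Theorem~\ref{thm:quot} then reduces purity of $\HH^1(-,\Sim_n)$ to purity of $\HH^1(-,\PGO_n)$, so the substance of the proof is to verify the latter.

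To prove purity of $\HH^1(-,\PGO_n)$, I would mimic the argument of Theorem~\ref{thm:pgl}. An $R$-unramified class $\xi \in \HH^1(K,\PGO_n)$ is represented by a central simple $K$-algebra $A$ of degree $n$ equipped with an orthogonal involution $\sigma$. Under the boundary map $\HH^1(K,\PGO_n) \to \HH^2(K,\Gm)$ it goes to the Brauer class $[A]$, which is therefore also $R$-unramified. The Morita-style construction from the proof of Theorem~\ref{thm:pgl}—first lift $[A]$ to an Azumaya $R$-algebra, then replace it by the endomorphism algebra of a suitable finitely generated projective module so as to adjust the degree—produces an Azumaya $R$-algebra $B$ of degree $n$ with $B_K \cong A$ as $K$-algebras.

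The hard part will be lifting the orthogonal involution $\sigma$ on $B_K = A$ to an orthogonal involution $\tau$ on $B$ over $R$. The $R$-scheme of orthogonal involutions on $B$ is a torsor under an appropriate inner form of $\PGO_n$. Because $\xi$ is $R$-unramified, this torsor acquires a section over $K$ (namely $\sigma$) and over each $R_{\mathfrak p}$ with $\hgt \mathfrak p = 1$. To piece these sections together into a global $\tau$, I would apply Nisnevich injectivity (Lemma~\ref{Injectivity}) at each $R_{\mathfrak p}$ to see that the local involutions agree with the generic $\sigma$ after base change to $K$, and then run a diagram chase analogous to the one in the proof of Theorem~\ref{thm:quot} to assemble the coherent family $\{\sigma_{\mathfrak p}\}$ into a single involution $\tau$ defined on $B$. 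The pair $(B,\tau)$ then represents a class in $\HH^1(R,\PGO_n)$ whose restriction to $K$ is $\xi$; combined with the initial reduction this gives the desired purity for $\HH^1(-,\Sim_n)$.
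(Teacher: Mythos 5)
Your reduction via Theorem~\ref{thm:quot} applied to $1\to\Gm\to\Sim_n\to\PGO_n\to1$ is legitimate in form, but it shifts the entire difficulty onto purity for $\HH^1(-,\PGO_n)$, and that is where the proposal breaks down. The Morita argument from Theorem~\ref{thm:pgl} does produce an Azumaya $R$-algebra $B$ with $B_K\cong A$, but the involution-lifting step is not a diagram chase. The scheme of orthogonal involutions on $B$ is not a torsor under an inner form of $\PGO_n$; it is a homogeneous space (a form of $\PGL_n/\PGO_n$), and producing an $R$-point of such a homogeneous space from a $K$-point together with $R_\pp$-points at the height-one primes is itself a purity statement that requires a genuine input, not formal patching. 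Lemma~\ref{Injectivity} and the chase in the proof of Theorem~\ref{thm:quot} rely on the torsor structure (the simply transitive action of ${\mathcal F}$ on the fibers of $\pi$) and do not apply to this homogeneous space. Worse, in the case actually needed the underlying algebra is split: the class comes from $\HH^1(K,\Sim_n)$, so $A\cong M_n(K)$ with the adjoint involution of a quadratic form, $B=M_n(R)$, and ``lift the involution'' becomes precisely the assertion that the similarity class of an $R$-unramified quadratic form over $K$ extends to $R$ --- which is the theorem you set out to prove. The proposal is therefore circular at its central step.

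The paper avoids all of this by working directly with quadratic forms. It represents $\xi$ by a diagonal form $\varphi=\sum_{i=1}^n f_i t_i^2$; at all but finitely many height-one primes $\varphi$ itself is already a quadratic space over $R_\pp$, while at the remaining primes $\frak q_j$ unramifiedness of the similarity class only provides a space $_j\varphi$ over $R_{\frak q_j}$ with $(_j\varphi)\otimes K\cong\lambda_j\cdot\varphi$. The key trick is to set $\lambda=\prod_j\pi_j^{v_j(\lambda_j)}$ for uniformizers $\pi_j$ and to check that the single rescaled form $\lambda\cdot\varphi$ is unramified \emph{as a quadratic space} at every height-one prime simultaneously; then the external input \cite[Corollary 3.1]{PP-sus} (purity for quadratic spaces) extends $\lambda\cdot\varphi$ to $R$, and $\lambda\cdot\varphi$ is similar to $\varphi$. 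Any version of your approach would still need this (or an equivalent) result at the involution-lifting step, so the detour through $\PGO_n$ buys nothing.
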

\begin{proof}
Let $\varphi$ be a quadratic form over $K$ whose similarity class represents
$\xi\in\HH^1(K,\Sim_n)$.
Diagonalizing $\varphi$ we may assume that
$\varphi= \sum^n_{i=1}f_i\cdot t^2_i$
for certain non-zero elements
$f_1, f_2, \dots , f_n \in K$.
For each $i$ write $f_i$ in the form
$f_i=\frac {g_i}{h_i}$
with
$g_i, h_i \in R$
and $h_i \neq 0$.

There are only finitely many
height one prime ideals
$\frak q$ in $R$
such that there exists
$0 \leq i \leq n$
with
$f_i$ not in $R_{\frak q}$.
Let
$\frak q_1, \frak q_2, \dots , \frak q_s$
be all height one prime ideals in $R$
with that property and let
$\frak q_i \neq \frak q_j$
for $i \neq j$.

For all other hight one prime ideals
$\frak p$ in $R$
each $f_i$ belongs to
the group of units
$R^{\times}_{\frak p}$
of the ring
$R_{\frak p}$.

If $\frak p$ is a hight one prime ideal of $R$ which is not from the list
$\frak q_1, \frak q_2, \dots , \frak q_s$,
then
$\varphi= \sum^n_{i=1}f_i\cdot t^2_i$
may be regarded as a quadratic space over
$R_{\frak p}$. We will write
$_{\frak p}\varphi$
for that quadratic space over $R_{\frak p}$.
Clearly, one has
$(_{\frak p}\varphi) \otimes_{R_{\frak p}} K = \varphi$
as quadratic spaces over $K$.

For each
$j \in \{1,2, \ldots, s \}$
choose and fix a quadratic space
$_j \varphi$
over
$R_{\frak q_j}$
and a non-zero element
$\lambda_j \in K$
such that
the quadratic spaces
$(_j \varphi) \otimes_{R_{\frak q_j}} K$
and
$\lambda_j \cdot \varphi$
are isomorphic over $K$.
The ring $R$ is factorial since it is regular and local.
Thus for each
$j \in \{1,2, \ldots, s \}$
we may choose an element
$\pi_j \in R$
such that firstly
$\pi_j$ generates the only maximal ideal in
$R_{\frak q_j}$
and secondly
$\pi_j$ is an invertible element in
$R_{\frak n}$
for each hight one prime ideal
$\frak n$
different from the ideal
$\frak q_j$.

Let
$v_j\colon K^{\times} \to \mathbb Z$
be the discrete valuation of $K$
corresponding to the prime ideal
$\frak q_j$.
Set
$\lambda = \prod^s_{i=1} \pi^{v_j(\lambda_j)}_j$
and
$$\varphi_{new} = \lambda \cdot \varphi.$$

{\it Claim.}
The quadratic space $\varphi_{new}$ is $R$-unramified.
In fact,
if a hight one prime ideal $\frak p$ is different from each of $\frak q_j$'s,
then
$v_{\frak p}(\lambda)=0$.
Thus,
$\lambda \in R^{\times}_{\frak p}$.
In that case
$\lambda \cdot (_\frak p \varphi)$
is a quadratic space over
$R_{\frak p}$
and moreover one have isomorphisms of quadratic spaces
$(\lambda \cdot (_\frak p \varphi)) \otimes_{R_{\frak p}} K = \lambda \cdot \varphi = \varphi_{new}$.
If we take one of $\frak q_j$'s,
then
$\frac {\lambda}{\lambda_j} \in R^{\times}_{\frak q_j}$.
Thus,
$\frac {\lambda}{\lambda_j} \cdot (_j \varphi)$
is a quadratic space over $R_{\frak q_j}$.
Moreover,
one has
$$\frac {\lambda}{\lambda_j} \cdot (_j \varphi) \otimes_{R_{\frak q}} K=
\frac {\lambda}{\lambda_j} \cdot \lambda_j \cdot \varphi = \varphi_{new}.$$
The Claim is proven.

By
%\cite[Theorem~3.0.2]{Pa-pur2}
\cite[Corollary 3.1]{PP-sus}
there exists a quadratic space
$\tilde \varphi$
over $R$
such that the quadratic spaces
$\tilde \varphi \otimes_R K$
and
$\varphi_{new}$
are isomorphic over $K$.
This shows that the similarity classes of the quadratic spaces
%$\frac {\lambda}{\lambda_j} \cdot (_j \varphi) \otimes_{R} K$
$\tilde \varphi \otimes_R K$
and
$\varphi$
coincide.
The theorem is proven.
\end{proof}

\begin{thm}\label{thm:sim+}
The functor $\HH^1(-,\Sim_n^+)$ satisfies purity.
\end{thm}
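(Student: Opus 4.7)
The plan is to reduce this theorem to Theorem~\ref{thm:sim} via the short exact sequence of algebraic groups
$$ 1 \to \Sim_n^+ \to \Sim_n \to \mu_2 \to 1, $$
whose right-hand map sends a similitude $g$ to $\det(g)/\mu(g)^{n/2}$. (For $n$ odd, $\Sim_n^+ = \Sim_n$ and there is nothing to prove; I therefore assume $n$ even.)

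Given an $R$-unramified class $\xi \in \HH^1(K, \Sim_n^+)$, I would first push it forward to $\bar{\xi} \in \HH^1(K, \Sim_n)$, which remains $R$-unramified by functoriality. Theorem~\ref{thm:sim} then yields some $\tilde{\xi} \in \HH^1(R, \Sim_n)$ with $\tilde{\xi}_K = \bar{\xi}$. The obstruction to lifting $\tilde{\xi}$ through the quotient map $\Sim_n \to \mu_2$ is an element $d \in \HH^1(R, \mu_2) = R^\times/R^{\times 2}$; its image in $K^\times/K^{\times 2}$ vanishes, since $\bar{\xi}$ already lifts to $\xi$. Because $R$ is a regular local ring, hence a UFD, the map $R^\times/R^{\times 2} \to K^\times/K^{\times 2}$ is injective, so $d = 0$ and $\tilde{\xi}$ has some preimage $\zeta \in \HH^1(R, \Sim_n^+)$.

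It remains to establish $\zeta_K = \xi$ in $\HH^1(K, \Sim_n^+)$: both classes lie in the fiber of $\HH^1(K, \Sim_n^+) \to \HH^1(K, \Sim_n)$ over $\bar{\xi}$, but this fiber is generally nontrivial. To handle this I would twist the exact sequence by $\zeta$; since $\mu_2$ is abelian, its twist is still $\mu_2$, giving
$$ 1 \to (\Sim_n^+)^\zeta \to \Sim_n^\zeta \to \mu_2 \to 1. $$
The standard bijection $\HH^1(K, \Sim_n^+) \cong \HH^1(K, (\Sim_n^+)^\zeta)$ carries $\zeta_K$ to the neutral class and $\xi$ to some $\xi'$ whose image in $\HH^1(K, \Sim_n^\zeta)$ is neutral. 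By the resulting long exact sequence of pointed sets, $\xi'$ lies in the image of the boundary $\mu_2(K) \to \HH^1(K, (\Sim_n^+)^\zeta)$, which factors through $\mu_2(K)/\nu(\Sim_n^\zeta(K))$, where $\nu(g) = \det(g)/\mu(g)^{n/2}$. But $\Sim_n^\zeta$ is the similitude group of some nondegenerate quadratic $K$-form $\psi$; since $\mathrm{char}\,K \ne 2$, $\psi$ admits a nonisotropic vector $v$, and the corresponding reflection $\sigma_v \in \OO(\psi)(K) \subset \Sim_n^\zeta(K)$ satisfies $\det(\sigma_v) = -1$ and $\mu(\sigma_v) = 1$, hence $\nu(\sigma_v) = -1$. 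Thus $\nu$ is surjective, $\xi' = *$, and $\xi = \zeta_K$.

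The main obstacle is this last step. Lifting $\tilde{\xi}$ to $\zeta$ is essentially formal once Theorem~\ref{thm:sim} and the UFD property of $R$ are in hand, whereas the fibers of $\HH^1(-, \Sim_n^+) \to \HH^1(-, \Sim_n)$ are genuinely nontrivial, so a separate twisting argument together with the existence of a reflection over $K$ (relying on $\mathrm{char}\,K \ne 2$) is required to force the lift $\zeta$ to match $\xi$ over $K$, and not merely its shadow in $\HH^1(K, \Sim_n)$.
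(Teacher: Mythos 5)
Your argument is essentially the paper's own proof: push forward to $\HH^1(-,\Sim_n)$, apply Theorem~\ref{thm:sim}, use the injectivity of $R^{\times}/(R^{\times})^2\to K^{\times}/(K^{\times})^2$ to lift over $R$, and then kill the ambiguity in the fiber of $\HH^1(K,\Sim_n^+)\to\HH^1(K,\Sim_n)$ by twisting and exhibiting a reflection with $\nu=-1$. You have merely spelled out the twisting step and the $\HH^1(R,\mu_2)$ obstruction in more detail than the paper does; the approach and all key ingredients coincide.
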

\begin{proof}
Consider an element $\xi\in\HH^1(K,\Sim_n^+)$ such that for any $\pp$ of
height $1$ $\xi$ comes from $\xi_{\pp}\in\HH^1(R_{\pp},\Sim_n^+)$.
Then the image of $\xi$ in $\HH^1(K,\Sim_n)$ by Theorem~\ref{thm:sim}
comes from some $\zeta\in\HH^1(R,\Sim_n)$. We have a short exact sequence
$$
1\to\Sim_n^+\to\Sim_n\to\mu_2\to 1,
$$
and
$R^{\times}/(R^{\times})^2$
injects into
$K^{\times}/(K^{\times})^2$.
Thus the element
%the Grothendieck--Serre conjecture holds for the functor $\mu_2$, so
$\zeta$ comes actually from some $\zeta'\in\HH^1(R,\Sim_n^+)$. It remains
to show that the map
$$
\HH^1(K,\Sim_n^+)\to\HH^1(K,\Sim_n)
$$
is injective, or, by twisting, that the map
$$
\HH^1(K,\Sim^+(q))\to\HH^1(K,\Sim(q))
$$
has the trivial kernel. The latter follows from the fact that the map
$$
\Sim(q)(K)\to\mu_2(K)
$$
is surjective (indeed, any reflection goes to $-1\in\mu_2(K)$).
\end{proof}

\section{Proof of the Main Theorem}

Let $\xi$ be a class from $\HH^1(R,G)$, and
$X={}_\xi(G/P)$ be the corresponding homogeneous space.
Denote by $L$ a Levi subgroup of $P$.

\begin{lem}\label{lem:morph}
Let $L$ modulo its center be isomorphic to $\PGO_{2m}^+$ (resp., $\PGO_{2m+1}^+$
or $\PGO_{2m}^+\times\PGL_2$). Denote by $\Psi$ the closed subset in $\XX^*(T)$
of type $D_m$ (resp. $B_m$ or $D_m+A_1$) corresponding to $L$, $T$ stands for a
maximal split torus in $L$. Assume that there is an element $\lambda\in\XX^*(T)$
such that $\Psi$ and $\lambda$ generate a closed subset of type $D_{m+1}$
(resp. $B_{m+1}$ or $D_{m+1}+A_1$), and $\Psi$ forms the standard subsystem of
type $D_m$ (resp. $B_m$ or $D_m+A_1$) therein. Then there is a surjective map
$L\to\Sim_{2m}^+$ (resp., $L\to\Sim_{2m+1}^+$ or $L\to\Sim_{2m}^+\times\PGL_2$)
whose kernel is a central closed subgroup scheme in $L$. In particular, the functor
$\HH^1(-,L)$ satisfies purity.
\end{lem}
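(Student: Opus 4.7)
The purity conclusion will be the easy part: once we have a surjective homomorphism $\pi\colon L\to\Sim_{2m}^+$ (respectively $L\to\Sim_{2m+1}^+$, or $L\to\Sim_{2m}^+\times\PGL_2$) with central kernel, Theorem~\ref{thm:quot} applied to the resulting short exact sequence $1\to\Ker\pi\to L\to\Sim_{2m}^+\to 1$ reduces purity of $\HH^1(-,L)$ to purity of $\HH^1(-,\Sim_{2m}^+)$ (or $\HH^1(-,\Sim_{2m+1}^+)$) and, in the product case, to purity of $\HH^1(-,\PGL_2)$ as well. These are provided by Theorems~\ref{thm:sim+} and~\ref{thm:pgl}, together with the trivial identity $\HH^1(-,G_1\times G_2)=\HH^1(-,G_1)\times\HH^1(-,G_2)$ for affine group schemes. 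So the real task is the construction of $\pi$.

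To build $\pi$ in the even case, consider the split simply-connected group $\tilde H$ of type $D_{m+1}$ over $k$ with maximal split torus $\tilde T$. The hypothesis that $\Psi$ and $\lambda$ generate a closed subset of type $D_{m+1}$ in which $\Psi$ is the standard $D_m$-subsystem identifies this closed subset with the root system of $\tilde H$ in its standard presentation, with $\Psi$ corresponding to the $D_m$-subsystem and $\lambda$ corresponding to the added simple node. Inside $\tilde H$, let $\tilde L$ be the Levi subgroup of the maximal parabolic that removes the node represented by $\lambda$. A direct root-datum computation identifies $\tilde L$ with the split group $\Sim_{2m}^+$: its derived subgroup is $\Spin_{2m}$, and its cocenter $\Gm$ plays the role of the similitude multiplier.

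Now we produce $\pi\colon L\to\tilde L=\Sim_{2m}^+$. By the Chevalley--Demazure existence and uniqueness theorem for split reductive groups, such a morphism is determined by its effect on root data. We take the morphism of root data that identifies $\Psi$ with the $D_m$-subsystem of $\tilde L$ and sends the cocharacter picked out by $\lambda$ to the similitude multiplier of $\tilde L$. Since both sides have the same root system $D_m$ on the derived part, $\pi$ is surjective on derived subgroups, and by construction it is surjective on the toric quotient $\Gm$; hence $\pi$ is surjective. Its kernel has trivial image in both the adjoint quotient $\PGO_{2m}^+$ of $\tilde L$ and in the multiplier $\Gm$, so it lies in the centre of $L$. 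The odd case is identical with $B_{m+1}$ replacing $D_{m+1}$, and the $D_m+A_1$ case applies this construction on the first factor and maps the $A_1$-summand of the derived group of $L$ (which is of adjoint type since $L/Z(L)$ contains $\PGL_2$) identically to the $\PGL_2$-factor of the target.

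The main obstacle is the combinatorial identification step in the middle paragraph. One has to pin down exactly which central isogeny class of the Levi $\tilde L$ arises, verifying it is $\Sim_{2m}^+$ and not some other cover of $\PGO_{2m}^+$, and to ensure that the map of character lattices coming from the embedding of $\Psi$ into the $D_{m+1}$-subsystem preserves integrality so that it does define a morphism of reductive group schemes. This bookkeeping relies essentially on the hypothesis that $\Psi$ sits inside as the \emph{standard} $D_m$-subsystem; a non-standard embedding would produce a different, possibly wrong, central extension.
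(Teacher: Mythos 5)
Your overall architecture matches the paper's: realize $\Sim_{2m}^+$ (resp.\ $\Sim_{2m+1}^+$) as a Levi subgroup in a split group of type $D_{m+1}$ (resp.\ $B_{m+1}$), use the existence theorem of SGA3, Exp.~XXIII, Th.~4.1 to produce the homomorphism $L\to\Sim^+$ from the root--datum data furnished by $\Psi$ and $\lambda$, and then deduce purity from Theorem~\ref{thm:quot} together with Theorems~\ref{thm:sim+} and~\ref{thm:pgl}. The purity part of your argument is fine. But the key identification in your middle paragraph is wrong: $\Sim_{2m}^+$ is a Levi subgroup of the split \emph{adjoint} group $\PGO_{2m+2}^+$ of type $D_{m+1}$, not of the simply connected group $\Spin_{2m+2}$. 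Correspondingly, the derived subgroup of $\Sim_{2m}^+$ is $\SO_{2m}$, not $\Spin_{2m}$ as you assert (check $m=2$: the Levi of $\PGL_4$ at the middle node is $(\GL_2\times\GL_2)/\Gm\cong\Sim_4^+$, with derived group $(\SL_2\times\SL_2)/\mu_2\cong\SO_4$, whereas the Levi of $\SL_4$ there has derived group $\SL_2\times\SL_2=\Spin_4$).

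This is not a cosmetic slip, because the correctness of the construction hinges on which character lattice the target Levi has. A surjection $L\to\tilde L$ with central kernel requires an injection of character lattices $\XX^*(\tilde T)\hookrightarrow\XX^*(T)$ carrying roots to roots. For the Levi of the adjoint group, $\XX^*(\tilde T)$ is exactly the root lattice of $D_{m+1}$, i.e.\ $\ZZ\Psi+\ZZ\lambda$, which the hypothesis places inside $\XX^*(T)$ --- this is precisely what makes the map exist. For the Levi of the simply connected group, $\XX^*(\tilde T)$ is the full weight lattice of $D_{m+1}$, and the hypothesis gives you no map on that lattice; indeed the paper excludes the index $E_{7,1}^{66}$ exactly because there the required $\lambda$ would have to be a weight outside the root lattice. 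So with your choice of ambient group the homomorphism you want to invoke is not defined by the given data, and the group you would obtain is in any case a different central extension of $\PGO_{2m}^+$ than $\Sim_{2m}^+$. Replacing ``simply connected'' by ``adjoint'' (and $\Spin_{2m}$ by $\SO_{2m}$) repairs the argument and brings it in line with the paper's one-line proof.
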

\begin{proof}
It is easy to check that $\Sim_{2m}^+$ (resp. $\Sim_{2m+1}^+$)
is a Levi subgroup in the split adjoint group of type $D_{m+1}$
(resp. $B_{m+1}$). Now the first claim follows from
\cite[Exp.~XXIII, Thm.~4.1]{SGA}, and the rest follows from
Theorem~\ref{thm:sim+} and Theorem~\ref{thm:pgl}.
\end{proof}

\begin{lem}\label{lem:Levi}
For any $R$-algebra $S$ the map
$$
\HH^1(S,L)\to\HH^1(S,G)
$$
is injective. Moreover, $X(S)\ne\emptyset$ if and only if
$\xi_S$ comes from $\HH^1(S,L)$.
\end{lem}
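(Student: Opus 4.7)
The plan is to pass through the parabolic $P$ via its Levi decomposition. Write $P = L \ltimes U$, where $U$ is the unipotent radical of $P$. Since $G$ is split, $U$ is split unipotent and admits a filtration by normal subgroups whose successive quotients are isomorphic to $\Ga$. Because $\HH^1(\Spec(S), \Ga) = 0$ for any affine $S$, induction along this filtration yields $\HH^1(S, U) = 0$. The short exact sequence $1 \to U \to P \to L \to 1$ is split by the inclusion $L \hookrightarrow P$, and this splitting together with the vanishing of $\HH^1(S, U)$ produces a natural bijection $\HH^1(S, L) \cong \HH^1(S, P)$. This reduces both assertions of the lemma to the analogous assertions for the pair $(P, G)$.

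For the ``moreover'' clause: by construction $X = {}_\xi(G/P)$ is the twist of $G/P$ by the $G$-torsor representing $\xi$, so an $S$-point of $X$ is precisely a reduction of the structure group of $\xi_S$ from $G$ to $P$. This is encoded by the standard exact sequence of pointed sets
$$
X(S) \to \HH^1(S, P) \to \HH^1(S, G),
$$
in which the image of the first map coincides with the preimage of $\xi_S$ under the second. Hence $X(S) \ne \emptyset$ if and only if $\xi_S$ lifts to $\HH^1(S, P) \cong \HH^1(S, L)$.

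For injectivity the key input is the classical fact that a parabolic subgroup is self-normalizing in the ambient reductive group, $N_G(P) = P$. Given $\eta_1, \eta_2 \in \HH^1(S, L) \cong \HH^1(S, P)$ with common image in $\HH^1(S, G)$, twist by $\eta_1$ to reduce to the case $\eta_1 = 1$, so that $\eta_2$ lies in the kernel of $\HH^1(S, P) \to \HH^1(S, G)$. The equality $N_G(P) = P$ then forces the action of $G$-torsor automorphisms on the set of $P$-reductions to have trivial orbits, giving $\eta_2 = 1$.

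I expect the injectivity step to be the main technical point, since the analogous statement can fail for general reductive groups when $\Pic(S)$ is nontrivial (already for $L = \Gm \subset \SL_2$ the naive map on $\HH^1$ need not be injective). The specific structure of the Levi $L$ provided by Lemma~\ref{lem:morph}, together with the exceptional nature of $G$, should supply enough rigidity to carry the twisting argument through uniformly in $S$.
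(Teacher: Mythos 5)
The paper's own ``proof'' is a bare citation of \cite[Exp.~XXVI, Cor.~5.10]{SGA}, so what matters is whether your reconstruction actually works. Your first two steps are fine: the reduction $\HH^1(S,L)\cong\HH^1(S,P)$ via vanishing of $\HH^1$ of (all inner twists of) the unipotent radical over an affine base, and the identification of $X(S)$ with the set of $P$-reductions of $\xi_S$, are both the standard arguments. The problem is the injectivity step, and you have correctly located it as the danger point but then not actually closed it. After twisting, you must show the kernel of $\HH^1(S,P)\to\HH^1(S,G)$ is trivial, i.e.\ that $G(S)$ acts transitively on $(G/P)(S)$. The equality $N_G(P)=P$ only tells you that the transporter between two parabolics of the same type is a torsor under $P$; the triviality of that torsor is precisely the statement you are trying to prove, so the argument as written is circular. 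Worse, the statement you are trying to prove by a uniform argument is false for a general affine $S$: take $P=B$, so $L=T$ is a split maximal torus and $\HH^1(S,B)\cong\HH^1(S,T)\cong\Pic(S)^{r}$. The Weyl group lifts to $N_G(T)(S)$ for a split group, so $W$-conjugate tuples of line bundles induce isomorphic $G$-torsors while being distinct in $\HH^1(S,T)$ whenever $\Pic(S)$ is nontrivial. No ``rigidity of the exceptional Levi'' rescues this; the hypothesis that does is that every $S$ to which the lemma is applied ($K$, $R_\pp$, $R$) is local.

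The argument that is actually behind \cite[Exp.~XXVI, Cor.~5.10]{SGA} for (semi-)local $S$ is different in kind: one uses the big cell to show that the projection $G\to G/P$ admits sections Zariski-locally, hence $G(S)\to(G/P)(S)$ is surjective when $S$ is semi-local, which gives triviality of the kernel directly (and injectivity after twisting). So either restrict the lemma to (semi-)local $R$-algebras and run the big-cell argument (or cite SGA~3 as the paper does), or accept that the statement ``for any $R$-algebra $S$'' cannot be proved because it is not true as stated.
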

\begin{proof}
See \cite[Exp.~XXVI, Cor.~5.10]{SGA}.
\end{proof}

\begin{lem}\label{lem:main}
Assume that the functor $\HH^1(-,L)$ satisfies purity.
Then $X(K)\ne\emptyset$ implies $X(R)\ne\emptyset$.
\end{lem}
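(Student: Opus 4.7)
The strategy is to translate, via Lemma \ref{lem:Levi}, the problem of producing an $R$-point of $X$ into a lifting problem for $\HH^1(-,L)$, which I would solve using the purity hypothesis together with a Grothendieck--Serre-type injectivity for $G$.

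Applying Lemma \ref{lem:Levi} over $K$, the hypothesis $X(K)\ne\emptyset$ yields a class $\eta_K \in \HH^1(K,L)$ mapping to $\xi_K$ in $\HH^1(K,G)$. The key claim is that $\eta_K$ is $R$-unramified. To see this, fix a height one prime $\pp$ of $R$. The twisted form $X={}_\xi(G/P)$ is projective over $R$, so the valuative criterion of properness gives $X(R_\pp)=X(K)\ne\emptyset$. Lemma \ref{lem:Levi} applied over $R_\pp$ then produces $\eta_\pp \in \HH^1(R_\pp, L)$ mapping to $\xi_{R_\pp}$. Restricting to $K$ and using the injectivity of $\HH^1(K,L)\to\HH^1(K,G)$ from Lemma \ref{lem:Levi}, one gets $(\eta_\pp)_K = \eta_K$. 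Thus $\eta_K$ lies in $\bigcap_\pp\im[\HH^1(R_\pp,L)\to\HH^1(K,L)]$, and the purity hypothesis for $\HH^1(-,L)$ produces $\eta \in \HH^1(R,L)$ whose image in $\HH^1(K,L)$ coincides with $\eta_K$.

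Let $\xi'\in\HH^1(R,G)$ denote the image of $\eta$. By construction $\xi'_K=\xi_K$, and it remains to upgrade this generic equality to $\xi=\xi'$ over $R$. For this I would appeal to the Grothendieck--Serre-type injectivity of $\HH^1(R,G)\to\HH^1(K,G)$, which in the setting of the paper (split $G$ over a regular local ring containing an infinite field) follows from the Fedorov--Panin theorem and the standard twisting trick converting triviality of the kernel into full injectivity. Given this, $\xi=\xi'$, and Lemma \ref{lem:Levi} applied over $R$ gives $X(R)\ne\emptyset$.

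The main obstacle is precisely this last step of propagating the equality $\xi_K=\xi'_K$ from $K$ to $R$, which depends on deep input. The preceding steps are essentially formal once one notes that the projectivity of $X$ makes the $R$-unramifiedness of $\eta_K$ automatic at every height one prime.
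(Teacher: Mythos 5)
Your proposal follows essentially the same route as the paper: use Lemma~\ref{lem:Levi} and projectivity of $X$ to show the class in $\HH^1(K,L)$ is $R$-unramified, apply purity to descend it to $R$, and then upgrade the generic equality $\xi'_K=\xi_K$ to $\xi'=\xi$ by the twisting trick combined with Grothendieck--Serre-type triviality of the kernel for the twisted group ${}_{\xi'}G$ (the paper invokes \cite[Theorem~1.0.1]{Pa-newpur}, noting that ${}_{\xi'}G$ is isotropic, where you cite the later Fedorov--Panin theorem). The argument is correct and structurally identical.
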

\begin{proof}
By Lemma~\ref{lem:Levi} $\xi_K$ comes from some $\zeta\in\HH^1(K,L)$,
which is uniquely determined. Since $X$ is smooth projective, for any
prime ideal $\pp$ of height $1$ we have $X(R_\pp)\ne\emptyset$. By
Lemma~\ref{lem:Levi} $\xi_{R_\pp}$ comes from some
$\zeta_\pp\in\HH^1(R_\pp,L)$. Now $(\zeta_\pp)_K=\zeta$, and so
by the purity assumption there is $\zeta'\in\HH^1(R,L)$ such that
$\zeta'_K=\zeta$.

Set $\xi'$ to be the image of $\zeta'$ in $\HH^1(R,G)$. We claim that
$\xi'=\xi$.
To prove this recall that
by the construction $\xi'_K=\xi_K$. Further,
there are natural in $R$-algebras bijections
$\alpha_S: \HH^1(S,G) \to \HH^1(R,{}_{\xi'}G)$,
which takes the $\xi'$ to the distinguished element
$*_R \in \HH^1(R,{}_{\xi'}G)$.
The $R$-group scheme ${}_{\xi'}G$ is isotropic and one has equalities
$$(\alpha_R(\xi))_K=\alpha_K(\xi_K)=\alpha_K(\xi'_K)=*'_K \in \HH^1(K,{}_{\xi'}G).$$
Thus by
\cite[Theorem~1.0.1]{Pa-newpur}
one has
$$\alpha_R(\xi)=*'=\alpha_R(\xi') \in \HH^1(R,{}_{\xi'}G).$$
The $\alpha_R$ is a bijection, whence
$$\xi=\xi' \in \HH^1(R,G).$$
Lemma is proved.

\end{proof}

\begin{lem}\label{lem:taut}
Let $Q\le P$ be another parabolic subgroup, $Y={}_\xi(G/Q)$.
Assume that $X(K)\ne\emptyset$ implies $Y(K)\ne\emptyset$, and
$Y(K)\ne\emptyset$ implies $Y(R)\ne\emptyset$. Then $X(K)\ne\emptyset$
implies $X(R)\ne\emptyset$.
\end{lem}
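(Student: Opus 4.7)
The plan is to invoke the $G$-equivariant projection $G/Q\to G/P$ coming from the inclusion $Q\le P$ and to transport it through the twisting construction to get an $R$-morphism $Y\to X$. Explicitly, since both $G/Q$ and $G/P$ are left $G$-schemes over $R$ and the projection is $G$-equivariant, the functoriality of the twist by the cocycle $\xi$ produces a natural $R$-morphism
$$
\pi\colon Y={}_\xi(G/Q)\longrightarrow {}_\xi(G/P)=X.
$$

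With $\pi$ in hand, the argument collapses into a short chain of implications. I start by assuming $X(K)\ne\emptyset$. The first hypothesis of the lemma then supplies $Y(K)\ne\emptyset$, and the second hypothesis upgrades this to $Y(R)\ne\emptyset$. Choosing any $R$-point $y\in Y(R)$, the composition $\pi\circ y\colon\Spec(R)\to X$ is an $R$-point of $X$, so $X(R)\ne\emptyset$, as required.

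The only non-formal ingredient is the existence of the morphism $\pi$, and this is routine: the twist ${}_\xi(-)$ is functorial on the category of $G$-schemes with $G$-equivariant morphisms, so the projection $G/Q\to G/P$ descends to the twisted level. There is no genuine obstacle here; the lemma merely packages the standard reduction step that will be applied repeatedly when replacing an awkward parabolic $P$ by a conveniently smaller one $Q$ in the proof of the main theorem.
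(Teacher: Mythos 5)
Your proposal is correct and follows exactly the paper's own (one-line) argument: the twisted projection $Y={}_\xi(G/Q)\to{}_\xi(G/P)=X$ turns an $R$-point of $Y$ into one of $X$, and the two hypotheses supply $Y(R)\ne\emptyset$ from $X(K)\ne\emptyset$. You simply spell out the functoriality of twisting that the paper leaves implicit.
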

\begin{proof}
Indeed, there is a map $Y\to X$, so $Y(R)\ne\emptyset$ implies $X(R)\ne\emptyset$.
\end{proof}

\begin{proof}[Proof of Theorem~\ref{thm:main}]
By Lemma~\ref{lem:taut} we may assume that $P$ corresponds to an item from
the list of Tits \cite[Table~II]{Tits66}. We show case by case that $\HH^1(-,L)$
satisfies purity, hence we are in the situation of Lemma~\ref{lem:main}.

If $P=B$ is the Borel subgroup, obviously $\HH^1(-,L)=1$. In the case
of index $E_{7,4}^9$ (resp. ${}^1E_{6,2}^{16}$) $L$ modulo its center is
isomorphic to $\PGL_2\times\PGL_2\times\PGL_2$ (resp. $\PGL_3\times\PGL_3$), and
we may apply Theorem~\ref{thm:quot} and Theorem~\ref{thm:pgl}. In the all other
cases we provide an element $\lambda\in\XX^*(T)$ such that the assumption of
Lemma~\ref{lem:morph} holds ($\tilde\alpha$ stands for the maximal root,
enumeration follows \cite{Bu}). The
indices $E_{7,1}^{78}$, $E_{8,1}^{133}$ and $E_{8,2}^{78}$ are not in the list below
since in those cases the $L$ does not belong to one of the type $D_m$, $B_m$, $D_m \times A_1$.
The index $E_{7,1}^{66}$ is not in the list below since in that case
we need a weight $\lambda$ which is not in the
root lattice. So, the indices
$E_{7,1}^{78}$, $E_{8,1}^{133}$, $E_{8,2}^{78}$ and $E_{7,1}^{66}$
are the exceptions in the statement of the Theorem.

\medskip

\begin{tabular}{c|c}
Index&$\lambda$\\\hline
${}^1E_{6,2}^{28}$&$\alpha_1$\\
$E_{7,1}^{66}$&$-\omega_7$\\
$E_{7,1}^{48}$&$-\tilde\alpha$\\
$E_{7,2}^{31}$&$\alpha_1$\\
$E_{7,3}^{28}$&$\alpha_1$\\
$E_{8,1}^{91}$&$-\tilde\alpha$\\
$E_{8,2}^{66}$&$\alpha_8$\\
$E_{8,4}^{28}$&$\alpha_1$\\
$F_{4,1}^{21}$&$-\tilde\alpha$
\end{tabular}

\end{proof}

\bigskip
The authors heartily thank Anastasia Stavrova for discussion on the earlier version of this work.

\end{document}